\newtheorem*{theorem*}{Theorem}
\newtheorem{lemma}{Lemma}
\newtheorem*{corollary*}{Corollary}
\newtheorem*{proposition*}{Proposition}
\newtheorem{remark}{Remark}
\newtheorem*{remark*}{Remark}
\newtheorem*{definition*}{Definition}
\DeclareRobustCommand{\cev}[1]{%
  \mathpalette\do@cev{#1}%
}
\newcommand{\do@cev}[2]{%
  \fix@cev{#1}{+}%
  \reflectbox{$\m@th#1\vec{\reflectbox{$\fix@cev{#1}{-}\m@th#1#2\fix@cev{#1}{+}$}}$}%
  \fix@cev{#1}{-}%
}
\newcommand{\fix@cev}[2]{%
  \ifx#1\displaystyle
    \mkern#23mu
  \else
    \ifx#1\textstyle
      \mkern#23mu
    \else
      \ifx#1\scriptstyle
        \mkern#22mu
      \else
        \mkern#22mu
      \fi
    \fi
  \fi
}
\newcommand{\revc}{\cev{c}}
\begin{document}

\title{On the combinatorics of circular codes}
\author{Aleksandr Serdiukov
\footnote{\texttt{serdyukov-alexander@ya.ru}, Laboratory of Continuous Mathematical Education, Saint Petersburg, Russia},
Andrei Smolensky
\footnote{\texttt{andrei.smolensky@gmail.com}, Saint Petersburg State University, Saint Petersburg, Russia}}
\date{}
\maketitle

\begin{abstract}
The present paper is devoted to the study of the combinatorics of 216 maximal $C^3$ circular codes --- a particular type of structure arising in the analysis of genomic sequences. Their circularity property is believed to be intimately connected to the protection against the reading frame shift in the process of RNA translation. We present some new observations concerning the internal structure of circular codes, which give a way to construct all of them in a relatively simple manner.
\end{abstract}

\section{Introduction}
In 1996 Arqu\`{e}s and Michel \cite{ffst} divided all non-monogenous trinucleotides into three parts $X_0$, $X_1$ and $X_2$ according to the reading frame they most often occur in inside the protein coding sequences of genes in both prokaryotic and eukaryotic organisms. Namely, the following codons most often occur in the non-shifted reading frame:
\begin{align*}
X_0 = \{ & \mathtt{AAC}, \mathtt{AAT}, \mathtt{ACC}, \mathtt{ATC}, \mathtt{ATT}, \mathtt{CAG}, \mathtt{CTC}, \mathtt{CTG}, \mathtt{GAA}, \mathtt{GAC}, \\
& \mathtt{GAG}, \mathtt{GAT}, \mathtt{GCC}, \mathtt{GGC}, \mathtt{GGT}, \mathtt{GTA}, \mathtt{GTC}, \mathtt{GTT}, \mathtt{TAC}, \mathtt{TTC} \}.
\end{align*}
They have noticed that this set enjoys particularly nice properties, namely, it is a maximal $C^3$ circular code (see below).

A set $\mathcal{C}\subset2^{\mathcal{B}^3}$ of $3$-letter words on the genetic alphabet $\mathcal{B}=\{\mathtt{A}, \mathtt{C}, \mathtt{G}, \mathtt{T}\}$ is called a circular code (CC for short) if any concatenation of its words written on a circle can be decomposed into concatenation in a unique way.
A circular code $\mathcal{C}$ is called maximal if it has the maximal possible number of elements, that is, $|\mathcal{C}|=20$.
A circular code $\mathcal{C}$ is called self-complementary if for every word $w=N_1N_2N_3\in\mathcal{C}$ its reverse complement $\revc(w)=c(N_3)c(N_2)c(N_1)$ is also a word from $\mathcal{C}$. Here, as usual, $c\colon \mathcal{B}\to\mathcal{B}$ is the complementarity map:
\[ c(\mathtt{A})=\mathtt{T},\quad c(\mathtt{T})=\mathtt{A},\quad c(\mathtt{C})=\mathtt{G},\quad c(\mathtt{G})=\mathtt{C}. \]
Denote by $\alpha\colon\mathcal{B}^3\to\mathcal{B}^3$ the cycle permutation $(123)$, that is, $\alpha(N_1N_2N_3)=N_3N_1N_2$. A circular code $\mathcal{C}$ is called a $C^3$ circular code if $\revc(\mathcal{C})=\mathcal{C}$ and $\alpha(\mathcal{C})$ is also circular (then so is $\alpha^2(\mathcal{C})$ and $\revc(\alpha(\mathcal{C}))=\alpha^2(\mathcal{C})$).

Later all $216$ maximal $C^3$ circular codes have been identified \cite{cfcc,clas20c,identification} by means of a extensive computer calculations. The aim of the present paper is to study the interrelations between them.

In what follows by a circular code we will always mean a maximal $C^3$ circular code.

Two distinct circular codes will be called \emph{variative} if their intersection is of maximal possible size ($18$ words or $9$ pairs of complementary codons).

\section{The internal structure of a circular code}
Let $w\in\mathcal{B}^3$ be a codon. We say that that the shape of $w$ is $\mathtt{XXY}$, $\mathtt{XYY}$, $\mathtt{XYX}$ or $\mathtt{XYZ}$ if this codon can be obtained from it by a substitution of bases for $\mathtt{X}$, $\mathtt{Y}$ and $\mathtt{Z}$.
Let $\mathcal{C}$ be a circular code, define the shape-sets $sh(\mathcal{C}, \mathtt{XXY})$, $sh(\mathcal{C}, \mathtt{XYY})$, $sh(\mathcal{C}, \mathtt{XYX})$ and $sh(\mathcal{C}, \mathtt{XYZ})$ to be the sets of codons of $\mathcal{C}$ of the respective shape.
\begin{lemma}
$sh(\mathcal{C}, \mathtt{XYX})$ has $0$, $2$ or $4$ elements.
\end{lemma}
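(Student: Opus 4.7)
The strategy is to combine self-complementarity (which forces even cardinality) with a direct circularity obstruction (which caps the count at $4$). First I would note that the reverse-complement map $\revc$ preserves the shape $\mathtt{XYX}$, since $\revc(N_1 N_2 N_1) = c(N_1)\,c(N_2)\,c(N_1)$, and has no fixed points on such codons because $c$ itself is fixed-point-free on $\mathcal{B}$. Self-complementarity therefore partitions $sh(\mathcal{C}, \mathtt{XYX})$ into complementary pairs, so $|sh(\mathcal{C}, \mathtt{XYX})|$ is automatically even.

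The key tool for the upper bound is the forbidden-pair principle: \emph{the codons $N_1 N_2 N_1$ and $N_2 N_1 N_2$ cannot both belong to a circular code}. Indeed, otherwise the cyclic word $N_1 N_2 N_1 N_2 N_1 N_2$ of length $6$ admits two genuinely distinct decompositions into codewords, with cuts at positions $\{0,3\}$ and at $\{1,4\}$, contradicting circularity of $\mathcal{C}$.

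The remaining work is short bookkeeping. The $12$ codons of shape $\mathtt{XYX}$ split into $6$ complementary pairs. Two of them, $\{\mathtt{ATA}, \mathtt{TAT}\}$ and $\{\mathtt{CGC}, \mathtt{GCG}\}$, are themselves of the form $\{N_1 N_2 N_1, N_2 N_1 N_2\}$, so the forbidden-pair principle together with self-complementarity excludes them from $\mathcal{C}$ entirely. The remaining four pairs $\{\mathtt{ACA}, \mathtt{TGT}\}$, $\{\mathtt{CAC}, \mathtt{GTG}\}$, $\{\mathtt{AGA}, \mathtt{TCT}\}$, $\{\mathtt{GAG}, \mathtt{CTC}\}$ organise into two conflicting groups: the obstruction prevents both $\{\mathtt{ACA}, \mathtt{TGT}\}$ and $\{\mathtt{CAC}, \mathtt{GTG}\}$ from sitting inside $\mathcal{C}$ (since this would force $\mathtt{ACA}$ and $\mathtt{CAC}$ together), and analogously for the other pair of pairs. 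Hence $\mathcal{C}$ contains at most one complementary pair from each group, yielding $|sh(\mathcal{C}, \mathtt{XYX})| \in \{0, 2, 4\}$. There is no conceptual difficulty beyond this case-checking; notably, the proof uses only the self-complementarity and circularity of $\mathcal{C}$ itself, not the auxiliary circularity of $\alpha(\mathcal{C})$ coming from the $C^3$ hypothesis.
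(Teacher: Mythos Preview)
Your proof is correct and follows essentially the same approach as the paper: parity from self-complementarity, and the upper bound of $4$ from the obstruction that $N_1N_2N_1$ and $N_2N_1N_2$ cannot both lie in a circular code. You are more explicit in the bookkeeping and in justifying the obstruction via the ambiguous circular word $N_1N_2N_1N_2N_1N_2$, whereas the paper simply asserts the obstruction and argues more tersely that the eight admissible $\mathtt{XYX}$-codons (those with non-complementary first two letters) split into four forbidden $\{N_1N_2N_1, N_2N_1N_2\}$ pairs, of which at most one member each can occur.
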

\begin{proof}
If $\mathcal{C}$ contains a word $w$ of shape $\mathtt{XYX}$, then it must also contain its complement, which is distinct from $w$, so the size of $sh(\mathcal{C},\mathtt{XYX})$ is even.

Note that since a circular code can not simultaneously contain words of the form $N_1N_2N_1$ and $N_2N_1N_2$, the first two letters of $w$ are non-complement to each other. This shows that there are at most $8$ codons of shape $\mathtt{XYX}$ that can appear in a circular code, but no more that a half of them simultaneously.
\end{proof}
\begin{lemma}
\label{8XYZ}
$sh(\mathcal{C}, \mathtt{XYZ})$ has exactly $8$ elements.
\begin{proof}
There are $24$ words of shape $\mathtt{XYZ}$, and they are naturally split into $8$ orbits under the $C_3$ action by circular shifts.

If $\mathcal{C}$ contains more than $8$ words of shape $\mathtt{XYZ}$ then at least two of them would be a circular shifts of each other, and this breaks the circularity property \cite{transformations}.

Since for every non-monogenous codon $w$ the three circular shifts $w$, $\alpha(w)$ and $\alpha^2(w)$ are evenly distributed between $\mathcal{C}$, $\alpha(\mathcal{C})$ and $\alpha^2(\mathcal{C})$, the result follows.
\end{proof}
\end{lemma}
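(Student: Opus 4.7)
The plan is to show both $|sh(\mathcal{C},\mathtt{XYZ})|\le 8$ and $|sh(\mathcal{C},\mathtt{XYZ})|\ge 8$ by an orbit-counting argument under the cyclic shift $\alpha$. First I would enumerate the words of shape $\mathtt{XYZ}$: choosing three distinct letters from $\mathcal{B}$ in order gives $4\cdot 3\cdot 2 = 24$ such codons. Because the three cyclic shifts of any $\mathtt{XYZ}$ word are pairwise distinct, these $24$ codons split into $8$ free orbits of size $3$ under the cyclic group $\langle\alpha\rangle$.

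For the upper bound, I would invoke the standard result \cite{transformations} that a circular code cannot contain two codons which are cyclic shifts of one another. Hence $\mathcal{C}$ meets each of the $8$ orbits in at most one codon, giving $|sh(\mathcal{C},\mathtt{XYZ})|\le 8$ immediately.

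For the lower bound, I would exploit the three associated codes $\mathcal{C}$, $\alpha(\mathcal{C})$, $\alpha^2(\mathcal{C})$, each of which is circular by the $C^3$ assumption and has $20$ elements by maximality. The first step is to argue pairwise disjointness: if some $w$ belonged to $\mathcal{C}\cap\alpha^k(\mathcal{C})$ for $k\in\{1,2\}$, then $w$ and $\alpha^{-k}(w)$ would both lie in $\mathcal{C}$, contradicting circularity. Next, since a circular code can never contain a monogenous codon (the concatenation $\mathtt{XXX}\cdot\mathtt{XXX}$ on a circle admits shifted decompositions), the disjoint union $\mathcal{C}\sqcup\alpha(\mathcal{C})\sqcup\alpha^2(\mathcal{C})$ has exactly $60$ elements and must therefore coincide with the full set of $60$ non-monogenous codons. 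In particular all $24$ codons of shape $\mathtt{XYZ}$ are accounted for. Since $\alpha$ preserves the $\mathtt{XYZ}$ shape and restricts to bijections between the three sets, each of them hosts the same number of $\mathtt{XYZ}$ codons, and the only way to split $24$ equally is $8+8+8$.

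The step I expect to be the main obstacle is the lower bound, specifically the passage from the disjointness of $\mathcal{C}$, $\alpha(\mathcal{C})$, $\alpha^2(\mathcal{C})$ to their union exhausting all non-monogenous codons: this uses the exact arithmetic $3\cdot 20=60$ together with maximality, and without maximality the argument collapses. The upper bound, by contrast, is a direct consequence of the orbit obstruction already familiar in the theory.
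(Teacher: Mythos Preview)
Your argument is correct and follows essentially the same route as the paper: the upper bound via the pigeonhole on the eight $\langle\alpha\rangle$-orbits, and the lower bound via the even distribution of each non-monogenous codon's shifts among $\mathcal{C}$, $\alpha(\mathcal{C})$, $\alpha^2(\mathcal{C})$. You simply unpack the paper's one-line ``evenly distributed'' claim into the explicit disjointness and $3\cdot 20=60$ counting, which is a welcome elaboration rather than a different method. One cosmetic point: your disjointness step tacitly assumes $w\neq\alpha^{-k}(w)$, which only fails for monogenous codons; since you exclude those in the very next sentence, the logic is sound, but swapping the order of those two observations would make the dependency transparent.
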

\begin{remark}
$\mathcal{C}$ cannot simultaneously contain $N_1N_1N_2$ and $N_2N_2N_1$.
\end{remark}
\begin{proof}
If both $N_1N_1N_2, N_2N_2N_1\in\mathcal{C}$, then $N_1N_2N_1, N_2N_1N_2\in\alpha^2(\mathcal{C})$, which is impossible by the circularity of $\alpha^2(\mathcal{C})$.
\end{proof}
Since $|\mathcal{C}|=20$ and $\revc(sh(\mathcal{C}, \mathtt{XXY})=sh(\mathcal{C}, \mathtt{XYY})$, the size of $sh(\mathcal{C}, \mathtt{XXY})$ is $4$, $5$ or $6$ depending on the size of $sh(\mathcal{C}, \mathtt{XYX})$.
\begin{lemma}
Let $N_1,N_2\in\mathcal{B}$ be non-complementary, and $M_1,M_2\in\mathcal{B}$ be the two remaining bases. The code $\mathcal{C}$ contains at least one of $N_1N_1N_2$, $N_2N_2N_1$, $M_1M_1M_2$ and $M_2M_2M_1$.
\end{lemma}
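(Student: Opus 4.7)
The approach is proof by contradiction. Throughout I will use one recurring tool: any circular code contains at most one codon from each $\alpha$-orbit $\{w, \alpha(w), \alpha^2(w)\}$ of size three, since otherwise the periodic word $w\cdot w$ admits two distinct cyclic decompositions. I will refer to this as the orbit rule.

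Assume $\mathcal{C}$ contains none of the four listed codons. Then by self-complementarity it also misses the reverse complements $N_2 N_1 N_1, N_1 N_2 N_2, M_2 M_1 M_1, M_1 M_2 M_2$, so every XXY codon of $\mathcal{C}$ is mixed in the sense of having one letter in each of $\{N_1,N_2\}$ and $\{M_1,M_2\}$. The $8$ mixed XXY codons split into $4$ Remark pairs $(N_iN_iM_j, M_jM_jN_i)$, each contributing at most one codon, so $|sh(\mathcal{C}, \mathtt{XXY})| \le 4$; comparing with the lower bound established in the preceding paragraph of the paper forces equality, with exactly one codon from each Remark pair in $\mathcal{C}$, and hence $|sh(\mathcal{C}, \mathtt{XYX})| = 4$.

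The decisive step is to show that no allowed mixed XYX codon can belong to $\mathcal{C}$; I will run the argument for one representative and invoke symmetry for the other three. Starting from a hypothetical $N_1 M_2 N_1 \in \mathcal{C}$: the orbit rule applied to $\{N_1 M_2 N_1, N_1 N_1 M_2, M_2 N_1 N_1\}$ eliminates $N_1 N_1 M_2$, so the Remark pair forces $M_2 M_2 N_1 \in \mathcal{C}$; the orbit rule applied to $\{M_2 M_2 N_1, N_1 M_2 M_2, M_2 N_1 M_2\}$ eliminates $N_1 M_2 M_2$, and since $\revc(N_2 N_2 M_1) = N_1 M_2 M_2$ self-complementarity excludes $N_2 N_2 M_1$, so the Remark pair forces $M_1 M_1 N_2 \in \mathcal{C}$; finally $M_2 N_1 N_1 = \revc(M_1 M_1 N_2) \in \mathcal{C}$, which contradicts the orbit rule already applied to the very first orbit.

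With all mixed XYX codons excluded, $sh(\mathcal{C}, \mathtt{XYX})$ can only consist of pure XYX codons, of which at most $2$ can lie in $\mathcal{C}$ (the Lemma~1 pair constraint $(aba, bab)$ combined with self-complementarity forces them to come in self-complementary pairs). This is incompatible with $|sh(\mathcal{C}, \mathtt{XYX})| = 4$, yielding the desired contradiction. I expect the most delicate point to be the forcing chain in the third paragraph, where one must carefully track at each step which codons have been placed into and which excluded from $\mathcal{C}$.
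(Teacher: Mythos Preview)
Your argument is correct, but it takes a genuinely different route from the paper's.

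The paper proceeds constructively via the $C^3$ partition: given any $NMN\in\mathcal{C}$, it locates $MNM$ in $\alpha(\mathcal{C})$ or $\alpha^2(\mathcal{C})$, applies $\revc$ (using $\revc(\alpha^i(\mathcal{C}))=\alpha^{-i}(\mathcal{C})$), and shifts back to exhibit either $c(M)c(M)c(N)$ or $MMN$ inside $\mathcal{C}$. Running this over the four elements of $sh(\mathcal{C},\mathtt{XYX})$---in particular over an $\mathtt{XYX}$ codon on the letters $\{N_1,N_2\}$ or $\{M_1,M_2\}$---immediately produces one of the four codons in the statement. Your proof instead stays inside $\mathcal{C}$: you assume the four codons are absent, pin down that each mixed Remark pair contributes exactly one element, and then run a forcing chain using only the orbit rule and self-complementarity to expel every mixed $\mathtt{XYX}$ codon, leaving too few to reach $|sh(\mathcal{C},\mathtt{XYX})|=4$. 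The paper's approach is shorter and exploits the full $C^3$ structure in one stroke; yours is more hands-on but has the virtue of never leaving $\mathcal{C}$ for $\alpha^i(\mathcal{C})$, relying instead on facts (Remark~1, the size bounds) already established as black boxes.

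One presentational point: your chain silently assumes $M_1=c(N_1)$ and $M_2=c(N_2)$ (this is where $\revc(N_2N_2M_1)=N_1M_2M_2$ and $\revc(M_1M_1N_2)=M_2N_1N_1$ come from, and also what makes $N_1M_2N_1$ an admissible $\mathtt{XYX}$ codon). Since the statement does not fix the labelling of $M_1,M_2$, this is harmless---but say ``without loss of generality'' up front, and when you invoke symmetry for the remaining mixed $\mathtt{XYX}$ codons, name the relabelling $(N_1,N_2,M_1,M_2)\mapsto(M_2,M_1,N_2,N_1)$ that carries $N_1M_2N_1$ to $M_2N_1M_2$.
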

\begin{proof}
If $|sh(\mathcal{C}, \mathtt{XXY})|>4$, the result is obviuos from the numerical considerations. The size of $sh(\mathcal{C}, \mathtt{XXY})$ equals $4$ only if $|sh(\mathcal{C}, \mathtt{XYX})|=4$.

If $\mathcal{C}$ contains $NMN$, then $MNM\in\alpha^i(\mathcal{C})$, $i=1$ or $2$. Then $c(M)c(N)c(M)\in\alpha^{-i}(\mathcal{C})$. If $i=1$, then $c(M)c(M)c(N)\in\mathcal{C}$. If $i=2$, then $c(N)c(M)c(M)\in\mathcal{C}$ and hence $MMN\in\mathcal{C}$. Varying $NMN$ over the four elements of $sh(\mathcal{C}, \mathtt{XYX})$, one gets the result.
\end{proof}
\begin{remark}
For any $N\in\mathcal{B}$ the code $\mathcal{C}$ contains exactly one of $NNc(N)$ and $c(N)c(N)N$.
\end{remark}
\begin{proof}
Since $Nc(N)N, c(N)Nc(N)\notin\mathcal{C}$, either $Nc(N)N$ or $c(N)Nc(N)$ lies in $\alpha^2(\mathcal{C})$, hence either $NNc(N)$ or $c(N)c(N)N$ is in $\mathcal{C}$.
\end{proof}
\begin{remark}
$sh(\mathcal{C}, \mathtt{XYX})$ is determined completely by $sh(\mathcal{C}, \mathtt{XXY})$.
\end{remark}
\begin{proof}
Indeed, for $N,M\in\mathcal{B}$ the codon $NMN$ lies in $\mathcal{C}$ if and only if $NNM$ and $MNN$ do not.
\end{proof}
	
\section{The action of $D_4$}
It was noted in \cite{transformations} that there is a natural action of dihedral group $D_4$ on the circular codes. Namely, it is induced by the action of $D_4$ by square symmetries on the vertices of the square labeled by $\mathcal{B}$:
\begin{center}
\includegraphics{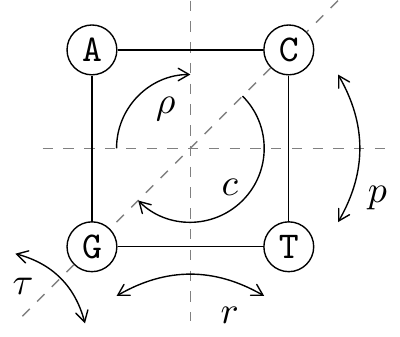}
\end{center}
Here the central reflection is the complementarity map $c$, the reflections with respect to the horizontal and vertical axes are the purine-pyrimidine duality $p$ and the keto-amine duality $r$. We also denote by $\rho$ the $90^\circ$ rotation and by $\tau$ the reflection with respect to the diagonal $\mathtt{CG}$.

The letter-wise action of $D_4$ on $\mathcal{B}^3$ induces an action on circular codes, that has the following properties.

\begin{lemma}\label{lemma:d4-cc-free}
$D_4$ action on circular codes is free.
\end{lemma}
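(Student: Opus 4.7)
\medskip
\noindent\textbf{Proof plan.} The plan is to rule out each non-identity element of $D_4$ stabilising $\mathcal{C}$, treating them by conjugacy class: the half-turn $c = \rho^2$, the quarter-turns $\rho$ and $\rho^3$, the two diagonal reflections, and the two perpendicular reflections.

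I would start with the half-turn. If $c(\mathcal{C}) = \mathcal{C}$, then combined with $\revc(\mathcal{C}) = \mathcal{C}$ this shows that the positional reversal $\sigma \colon N_1N_2N_3 \mapsto N_3N_2N_1$, equal to $c \circ \revc$, also fixes $\mathcal{C}$. Picking any $w = N_1N_1N_2 \in sh(\mathcal{C}, \mathtt{XXY})$ (such a $w$ exists since $|sh(\mathcal{C}, \mathtt{XXY})| \ge 4$), we obtain $\sigma(w) = N_2N_1N_1 = \alpha(w) \in \mathcal{C}$, contradicting the disjointness $\mathcal{C} \cap \alpha(\mathcal{C}) = \emptyset$ underlying the proof of Lemma~\ref{8XYZ}. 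This also rules out $\rho$ and $\rho^3$, since otherwise $\rho^2 = c$ would fix $\mathcal{C}$. For the diagonal reflection $\tau$, which fixes $\mathtt{C}, \mathtt{G}$ and swaps $\mathtt{A} \leftrightarrow \mathtt{T}$, the remark that exactly one of $NNc(N)$ and $c(N)c(N)N$ lies in $\mathcal{C}$, applied with $N = \mathtt{A}$, says that exactly one of $\mathtt{AAT}$ and $\mathtt{TTA}$ belongs to $\mathcal{C}$; but $\tau$ swaps these two, ruling out $\tau$-invariance. The other diagonal reflection $\rho^2\tau$ is killed identically with $N = \mathtt{C}$.

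The hard case, and the main obstacle, is that of the two perpendicular reflections $p$ and $r$. Suppose $p(\mathcal{C}) = \mathcal{C}$. The identities $p(\mathtt{AGA}) = \mathtt{GAG}$ and $p(\mathtt{CTC}) = \mathtt{TCT}$ couple XYX codons within the same unordered base pair, of which at most one can belong to $\mathcal{C}$ by the proof of the first lemma, so all four lie outside $\mathcal{C}$ and hence $|sh(\mathcal{C}, \mathtt{XYX})| \le 2$. Since $p$ has no fixed letter, its letter-action on codons is free, forcing $|sh(\mathcal{C}, \mathtt{XXY})|$ to be even; together with $2\,|sh(\mathcal{C}, \mathtt{XXY})| + |sh(\mathcal{C}, \mathtt{XYX})| = 12$, this pins the shape profile of $\mathcal{C}$ to $(6,6,0,8)$. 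To close the case one then analyses the action of the Klein four-group $\langle p, \revc \rangle$ on the 12 XXY-type $\alpha$-orbits (three Klein-orbits of four) and the 8 XYZ $\alpha$-orbits (two Klein-orbits of four), extracts the short finite list of candidate $p$-symmetric configurations consistent with this profile, and eliminates each by the circularity condition --- most efficiently by cross-checking with the explicit enumeration in~\cite{identification}. The case of $r$ reduces to $p$ by the $D_4$-conjugation $r = \rho p \rho^{-1}$.

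The main difficulty is closing this last case: unlike for the rotations and diagonal reflections, no one-line structural obstruction presents itself, so the final step appears to require either a careful combinatorial elimination of the configurations consistent with shape profile $(6,6,0,8)$, or a direct appeal to the enumerated list of 216 codes.
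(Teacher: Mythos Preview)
Your treatment of $c$, the quarter-turns, and the two diagonal reflections is fine and essentially matches the paper's argument (your route to $c\notin\operatorname{Stab}(\mathcal{C})$ via the positional reversal $\sigma=c\circ\revc$ is a pleasant variant of the same observation).

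The gap is in the $p$ and $r$ cases. You correctly identify these as the crux, but then you do not close them: your proposal ends with ``analyse the finite list of $p$-symmetric configurations'' or ``appeal to the enumeration in \cite{identification}''. That is not a proof, and it is also unnecessary. You already have on the table exactly the two ingredients needed for a one-line contradiction. By Lemma~3 (with $N_1,N_2$ the non-complementary pair $\mathtt{A},\mathtt{G}$) the code $\mathcal{C}$ contains at least one of $\mathtt{AAG}$, $\mathtt{GGA}$, $\mathtt{CCT}$, $\mathtt{TTC}$. The reflection $p$ swaps $\mathtt{AAG}\leftrightarrow\mathtt{GGA}$ and $\mathtt{CCT}\leftrightarrow\mathtt{TTC}$, so $p$-invariance would force \emph{both} members of one of these pairs into $\mathcal{C}$; but each pair is of the form $N_1N_1N_2,\,N_2N_2N_1$, which is forbidden by Remark~1. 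Hence $p\notin\operatorname{Stab}(\mathcal{C})$. The same argument with the quadruple $\mathtt{AAC},\mathtt{CCA},\mathtt{GGT},\mathtt{TTG}$ handles $r$ directly, so you do not even need the conjugation $r=\rho p\rho^{-1}$.

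In short: the structural obstruction you were looking for is Lemma~3 combined with Remark~1; no shape-profile analysis or case elimination is required.
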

\begin{proof}
Let $\mathcal{C}$ be a circular code. We will show that stabilizer $\operatorname{Stab}_{D_4}(\mathcal{C})$ is trivial.

Since $\mathcal{C}$ cannot simultaneously contain both $N_1N_1N_2$ and $c(N_1)c(N_1)c(N_2)$, $c\notin\operatorname{Stab}(\mathcal{C})$, hence also $\rho,\rho^{-1}\notin\operatorname{Stab}(\mathcal{C})$.

$\mathcal{C}$ contains exactly one of $\mathtt{AAT}$ and $\mathtt{TTA}$. But $\tau(\mathtt{AAT})=\mathtt{TTA}$, so $\tau\notin\operatorname{Stab}(\mathcal{C})$. A similar argument for $\mathtt{CCG}$ and $\mathtt{GGC}$ shows that the other diagonal reflection is not in the stabilizer.

There is at least one of $\mathtt{AAG}$, $\mathtt{GGA}$, $\mathtt{CCT}$ and $\mathtt{TTC}$ in $\mathcal{C}$. But $p$ swaps $\mathtt{AAG}$ and $\mathtt{GGA}$, which cannot both lie in $\mathcal{C}$, and the same goes for $\mathtt{CCT}$ and $\mathtt{TTC}$. This shows that $p\notin\operatorname{Stab}(\mathcal{C})$. Similar considerations for $\mathtt{AAC}$, $\mathtt{CCA}$, $\mathtt{GGT}$ and $\mathtt{TTG}$ imply $r\notin\operatorname{Stab}(\mathcal{C})$.
\end{proof}
\begin{remark}
This action can not be extended to the letter-wise action of $S_4$.
\end{remark}
\begin{proof}
For a set $X\subset 2^{\mathcal{B}^3}$ that satifies the circularity property and is of maximal size the self-complementarity is preserved by $\sigma\in S_4$ if and only if $\sigma$ commutes with $c$, and the centralizer of $c$ in $S_4$ is the $D_4$ as described above.
\end{proof}
Note that the shape of a codon is not changed by the action of $D_4$, and so for each shape $S$ one can define the action of $D_4$ on the set of shape-sets
\[ sh_S = \{ sh(\mathcal{C}, S) \mid \mathcal{C}\ \text{is a circular code} \}. \]
\begin{remark}
The action of $D_4$ on $sh_{\mathtt{XXY}}$ is free.
\end{remark}
\begin{proof}
Indeed, the proof of \cref{lemma:d4-cc-free} only uses the structure of $sh(\mathcal{C}, \mathtt{XXY})$.
\end{proof}
\begin{lemma}
$sh_{\mathtt{XYX}}$ has $9$ elements and splits into three orbits of length $1$, $4$ and $4$.
\end{lemma}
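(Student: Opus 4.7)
The plan is to enumerate the possible shape sets, verify that all of them are realized by circular codes, and finally compute the $D_4$ orbit decomposition. For the enumeration, I would combine the preceding lemma (which restricts $|sh(\mathcal{C}, \mathtt{XYX})|$ to $\{0, 2, 4\}$ and forces $N_1, N_2$ in any XYX codon $N_1 N_2 N_1 \in \mathcal{C}$ to be distinct and non-complementary, leaving $8$ candidate codons) with self-complementarity (which pairs the $8$ candidates into four complement-pairs $\{\mathtt{ACA}, \mathtt{TGT}\}$, $\{\mathtt{CAC}, \mathtt{GTG}\}$, $\{\mathtt{AGA}, \mathtt{TCT}\}$, $\{\mathtt{GAG}, \mathtt{CTC}\}$) and the "no $N_1 N_2 N_1$ together with $N_2 N_1 N_2$" rule (which partitions these four pairs into two mutually exclusive competition blocks, each containing two pairs). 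A shape set is then determined by an independent choice of "neither, first pair, or second pair" in each block, giving $3 \cdot 3 = 9$ candidates.

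The realization step---showing that every candidate is actually attained as $sh(\mathcal{C}, \mathtt{XYX})$ for some maximal $C^3$ code---is the main technical obstacle. I would either appeal to the published classification of the $216$ codes and inspect that each of the $9$ candidates occurs, or construct explicit codes case by case: for each prescribed XYX shape set I would use the remarks above (the "exactly one of $NNc(N)$ and $c(N)c(N)N$" constraint together with the "at least one of $N_1 N_1 N_2$, $N_2 N_2 N_1$, $M_1 M_1 M_2$, $M_2 M_2 M_1$" lemma) to pick compatible XXY codons, take XYY codons by complementation, complete with a valid assignment of XYZ codons, and verify circularity of the whole $20$-codon set directly.

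For the orbit calculation, the empty set is trivially $D_4$-fixed and gives the orbit of length $1$. For each size-$2$ shape set, $c$ lies in the stabilizer by complement closure, while any other element of $D_4$ must move the specific letter pair underlying the set to a different pair; the stabilizer is thus exactly $\{e, c\}$, so all four size-$2$ sets form a single orbit of length $4$. For the size-$4$ sets I would parameterize each by a pair $(y_1, y_2) \in \{a, b\}^2$ of option choices (one per competition block) and track how the generators $\rho$ and $\tau$ act on these parameters---$\rho$ interchanges the two blocks while simultaneously swapping the $a/b$ labels in each, and $\tau$ interchanges the blocks while preserving labels. Since the action factors through $D_4/\langle c \rangle$, reading off the orbits of this induced action on $\{a,b\}^2$ completes the claim.
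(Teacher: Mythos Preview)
Your enumeration of the nine candidate shape-sets is equivalent to the paper's: where the paper counts $1+4+(6-2)$ (the empty set, four complement-pairs of size~$2$, and four of the six pairwise unions surviving the $N_1N_2N_1/N_2N_1N_2$ exclusion), you reorganise the same data as $3\times 3$ independent choices in two blocks. The paper's proof stops at the count; it neither checks realisation nor computes the orbit structure, so on both points you are doing more than the paper does.

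There is, however, a genuine problem at the last step. If you actually carry out your own recipe for the size-$4$ sets, you do \emph{not} recover a single orbit of length~$4$. In your parametrisation $(y_1,y_2)\in\{a,b\}^2$ with $\bar\rho:(y_1,y_2)\mapsto(\bar y_2,\bar y_1)$ and $\bar\tau:(y_1,y_2)\mapsto(y_2,y_1)$, one has $\bar\rho$ fixing $(a,b),(b,a)$ and swapping $(a,a)\leftrightarrow(b,b)$, while $\bar\tau$ fixes $(a,a),(b,b)$ and swaps $(a,b)\leftrightarrow(b,a)$. The quotient $D_4/\langle c\rangle\cong C_2\times C_2$ therefore has \emph{two} orbits $\{(a,a),(b,b)\}$ and $\{(a,b),(b,a)\}$ on the size-$4$ shape-sets. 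Concretely, $\{\mathtt{ACA},\mathtt{TGT},\mathtt{AGA},\mathtt{TCT}\}$ is fixed by both $c$ and $\tau$ (stabiliser of order~$4$, orbit of size~$2$), while $\{\mathtt{ACA},\mathtt{TGT},\mathtt{GAG},\mathtt{CTC}\}$ is fixed by both $c$ and $\rho$ (again stabiliser of order~$4$). So your method, executed correctly, yields four orbits of sizes $1,4,2,2$, not three of sizes $1,4,4$; the discrepancy lies in the lemma's stated orbit decomposition rather than in your argument.
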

\begin{proof}
There are $4$ shape-sets of size $2$, and the shape-sets of size $4$ are obtained as the unions of shape-sets of size $2$. But of all 6 possible size $4$ unions two cannot be shape-sets for they simultaneously contain codons of the form $N_1N_2N_1$ and $N_2N_1N_2$.
\end{proof}
\begin{lemma}
$sh_{\mathtt{XYZ}}$ has $24$ elements and splits into $4$ orbits of sizes $4$ and $8$.
\end{lemma}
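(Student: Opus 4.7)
The plan is to combine the one-per-$\alpha$-orbit condition from Lemma~\ref{8XYZ} with self-complementarity to bound the count, then refine by circularity via direct enumeration, and finally read off the $D_4$-orbit structure.

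By Lemma~\ref{8XYZ}, any $sh(\mathcal{C}, \mathtt{XYZ})$ consists of exactly one codon from each of the $8$ orbits of $\alpha$ on shape-$\mathtt{XYZ}$ codons. The map $\revc$ sends an $\alpha$-orbit supported on a $3$-subset $S\subset\mathcal{B}$ to one supported on $c(S)$; since $c$ has no fixed point in $\mathcal{B}$, the $3$-subset $c(S)$ differs from $S$, so $\revc$ acts on the $8$ cyclic orbits without fixed points and partitions them into $4$ pairs. Self-complementarity links the choice in one orbit of each pair to the choice in its partner, reducing the candidate shape-sets to at most $3^4 = 81$.

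The next step is to cut this down to $24$. A realized shape-set must occur as $sh(\mathcal{C},\mathtt{XYZ})$ for some maximal $C^3$ circular code, which in particular forces it (and its two $\alpha$-shifts) to be circular as subsets of $\mathcal{B}^3$. I would carry out this step by extracting $sh(\mathcal{C}, \mathtt{XYZ})$ for each of the $216$ maximal $C^3$ circular codes of the classification \cite{cfcc,clas20c,identification} and counting the distinct values, arriving at exactly $24$.

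For the orbit decomposition under $D_4$, orbit sizes divide $|D_4|=8$. Once I verify, by exhibiting suitable representatives, that only the sizes $4$ and $8$ occur, the counting equation $4a + 8b = 24$ with $k = a + b$ orbits forces $a = b = 2$ and $k = 4$, matching the claim. To show size-$4$ orbits exist, I would identify a shape-set preserved by a non-trivial element of $D_4$ (necessarily of order~$2$), and verify that representatives of the remaining two orbits have trivial stabilizer. The main obstacle is the reduction from the $81$ self-complementary candidates to the $24$ actually realized ones: unlike the preceding $\mathtt{XYX}$ lemma, this step does not appear to admit a short purely structural proof and is most cleanly settled by appealing to the known classification of circular codes.
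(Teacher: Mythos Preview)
Your overall strategy differs from the paper's. You propose to extract the $\mathtt{XYZ}$ shape-sets from the known list of $216$ codes and count, whereas the paper gives a direct structural classification: it observes that each shape-set contains exactly two codons on every $3$-subset of $\mathcal{B}$, then sorts shape-sets by the number of distinct \emph{first} letters among their eight codons (two, three, or four). Shape-sets with two or four distinct first letters are fixed by an axial reflection ($p$ or $r$) or a diagonal reflection ($\tau$ or $\rho^{-1}\tau\rho$) respectively, yielding the two orbits of size~$4$; those with three distinct first letters fall into two orbits of size~$8$, distinguished by whether they contain pairs of the form $N_1N_2N_3$, $N_3N_2N_1$. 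The paper's route thus explains \emph{why} the orbit sizes are what they are, while yours would only certify the count.

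There is also a genuine gap in your orbit-counting step. The equation $4a + 8b = 24$ with both sizes present does \emph{not} force $a=b=2$: the solution $(a,b)=(4,1)$ is equally admissible. Your phrase ``the remaining two orbits'' already presupposes the four-orbit conclusion you are trying to establish. To rule out $(4,1)$ you must exhibit either two inequivalent shape-sets with trivial stabilizer or two inequivalent ones with nontrivial stabilizer; one representative of each size is not enough. Since you are enumerating from the $216$ codes anyway, it would be cleaner to compute the $D_4$-orbits directly and dispense with the counting equation.
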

\begin{proof}
Let $\mathcal{C}$ be a circular code. Since $|sh(\mathcal{C}, \mathtt{XYZ})|=8$, we can describe the shape-set for $\mathtt{XYZ}$ in more details. Namely, for any choice of three bases $N_1$, $N_2$ and $N_3$ there are exactly two words on these letters in $sh(\mathcal{C}, \mathtt{XYZ})$.

The two orbits of length $4$ are formed by the shape-sets that have even number of distinct bases for the first letter of their elements. They are stable under the action of certain reflections. Namely, shape-sets with only two distinct first letters are stable under $p$ or $r$, while those having four disting first letters are stable under $\tau$ or $\rho^{-1}\tau\rho$. In the latter case the are only two distinct middle letters.

The two orbits of length $8$ are formed by shape-sets having three distinct first letters. The difference between these two orbits is the number of pairs of codons of the form $N_1N_2N_3$ and $N_3N_2N_1$, the shape-sets in one orbit have two such pairs, the shape-sets in the other have none.
%
\end{proof}
The four orbits classified in the proof above will be denoted as follows:
\begin{itemize}
\item Type $\mathrm{I}$ is the length $4$ orbit of $p$- or $r$-invariant shape-sets;
\item Type $\mathrm{II}$ is the length $4$ orbit of $\rho$- or $\rho^{-1}\tau\rho$-invariant shape-sets;
\item Type $\mathrm{III}$ is the length $8$ orbit of shape-sets with two pairs of the form $N_1N_2N_3$ and $N_3N_2N_1$.
\item Type $\mathrm{IV}$ is the length $8$ orbit of shape-sets with no pairs of the form $N_1N_2N_3$ and $N_3N_2N_1$;
\end{itemize}

We now list the restrictions on the possible combinations of shape-sets for $\mathtt{XXY}$ and $\mathtt{XYZ}$.

For every possible shape-set for $\mathtt{XXY}$ there corresponds at most one element from each orbit of $sh_{\mathtt{XYZ}}$, with the sole exception described below.

The shape-set of type $\mathrm{I}$ corresponding to $\mathcal{C}$ has $N$ as one of the first letters for its codons if $NNc(N)\in\mathcal{C}$.

The shape-set of type $\mathrm{II}$ corresponding to $\mathcal{C}$ has $N$ for the middle letter if $\mathcal{C}$ contains a codon of the form $MNM$.

The shape-set of type $\mathrm{III}$ corresponding to $\mathcal{C}$ has $N$ for the middle letter of the symmetric pair of codons if $\mathcal{C}$ contains a codon of the form $MNM$. In particular, there in no such codon in $|sh(\mathcal{C}, \mathtt{XXY})|=6$.

The shape-set of type $IV$ has a base that appears five times as the first letter of its codons, the two other first letters appear two and one times. Such a shape-set $F$ corresponding to $\mathcal{C}$ is chosen as follows:
\begin{itemize}
\item If there are three distinct first letters $N, c(N), M$ in the codons of $sh(\mathcal{C}, \mathtt{XXY})$, then $F$ is the type $\mathrm{IV}$ shape-set with $M$ as the most frequent first letter and $N, c(N)$ as the other two.
\item If there are only two distinct first letters $N, M$ in the codons of $sh(\mathcal{C}, \mathtt{XXY})$, then their numbers of appearances are $(2,2)$, $(3,1)$ or $(3,2)$. In the first case the type $\mathrm{II}$ shape-set is chosen by the same considerations as above, but done for the frist letters of $sh(\mathcal{C}, \mathtt{XYY})$. In the second case the type $\mathrm{II}$ shape-set has $M$ for the most frequent first letter. In the third case there are two possible type $\mathrm{II}$ shape-sets, having either $N$ or $M$ for the most frequent first letter.
\end{itemize}

There is no corresponding shape-set for $\mathtt{XYZ}$ of a certain type if:
\begin{itemize}
\item $|sh(\mathcal{C}, \mathtt{XXY})|=4$, there are two codons of the form $NNM$ and $NNc(M)$ and all other codons of this shape-set do not have $N$ as the first letter. In this case there is no type $\mathrm{I}$ shape-set.
\item $sh(\mathcal{C}, \mathtt{XXY})$ is of the form $\{ NNc(N),MMc(M), NNM, MMc(N), c(M)c(M)c(N) \}$ for non-complementary $N$ and $M$. Then $\mathcal{C}$ cannot have a type $\mathrm{I}$ shape-set for $\mathtt{XYZ}$.
\item There are four different middle letters in $sh(\mathcal{C}, \mathtt{XYX})$. There is no corresponding type $\mathrm{II}$ shape-set.
\item There are three distinct first letters for the codons of $sh(\mathcal{C}, \mathtt{XXY})$ and one of them appear $3$ times, while the two others only one. Then both types $\mathrm{II}$ and $\mathrm{IV}$ are forbidden.
\item Type $\mathrm{III}$ shape-set is impossible in all cases when there is no corresponding shape-set of type $\mathrm{I}$ or no shape-set of type $\mathrm{II}$.
\end{itemize}

This list of restrictions is complete in the sense that it allows one to construct all $216$ circular codes by simply combining shape-sets by the rules described above. Modulo the $D_4$ action on $sh_{\mathtt{XXY}}$ each choice of a corresponding element from $sh_{\mathtt{XYZ}}$ gives a representative for the $D_4$ orbit on the set of circular codes. Indeed, there are $27$ possible combinations and no two are equivalent under this action.


\section{The graph of variative circular codes}
We conclude the paper with the following remark on the graph of variative circular codes.

Consider the graph $\Gamma$ with circular codes for vertices and edges connecting variative codes. The group $D_4$ acts on it by graph automorphisms. All $216$ vertices are divided into $27$ orbits under this action, each having $8$ elements. The interrelation between these orbits appears quite cumbersome, however, we have been able to figure out one unexpected regularity.

Namely, consider the quotient graph $\Gamma/D_4$, that is, its vertices are $D_4$-orbits and two orbits are connected if some of their members are connected in $\Gamma$. It turns out that the degree of each vertex in $\Gamma/D_4$ equals the degree of its elements as vertices of $\Gamma$. Note that this does not necessary hold for an arbitrary graph acted on by a group. For example, consider a square with diagonals and the action of $C_2$ by a reflection:
\begin{center}
\includegraphics[]{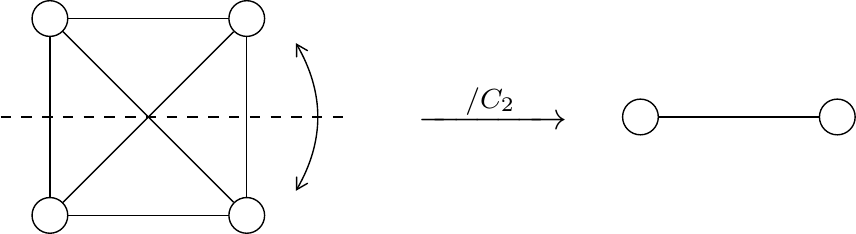}
%
%
\end{center}
This example also show that this property does not necessary hold for other natural definitions of a quotient graph (where multiple edges and loops are allowed).

\addcontentsline{toc}{section}{References:}

\end{document}